\newcommand{\QQ}{\mathbb{Q}}
\newcommand{\ran}{\mathscr{R}^-}
\newcommand{\rap}{\mathscr{R}^+}
\newcommand*\pFq[4]{{}_{#1}F_{#2}\biggl(\genfrac..{0pt}{}{#3}{#4};1\biggr)}
\newtheorem{theorem}{Theorem}[section]
\newtheorem{lemma}[theorem]{Lemma}
\title{Ramanujan-Bernoulli numbers as moments of Racah polynomials}
\author{F. Chapoton}
\date{\today}
\begin{document}

\maketitle

\begin{abstract}
  The classical sequence of Bernoulli numbers is known to the the
  sequence of moments of a family of orthogonal polynomials. Some
  similar statements are obtained for another sequence of rational
  numbers, which is similar in many ways to the Bernoulli numbers.
\end{abstract}

\section*{Introduction}

Let us consider the following sequence of rational numbers
\begin{equation*}
  \label{nombres_plus}
  (\rap_n)_{n\geq 0} = 1, \frac{1}{3}, \frac{1}{30}, -\frac{1}{105}, \frac{1}{210}, -\frac{1}{231},
  \frac{191}{30030}, -\frac{29}{2145}, \frac{2833}{72930}, -\frac{140051}{969969}, \dots
\end{equation*}
and the almost identical companion sequence
\begin{equation*}
  \label{nombres_moins}
  (\ran_n)_{n\geq 0} = 1, -\frac{1}{6}, \frac{1}{30}, -\frac{1}{105}, \frac{1}{210}, -\frac{1}{231},
  \frac{191}{30030}, -\frac{29}{2145}, \frac{2833}{72930}, -\frac{140051}{969969}, \dots
\end{equation*}
that differs only by the second term. These sequences
are very close to the classical sequence of Bernoulli numbers, but not so well known.

The sequence $\ran$ seems to have first appeared, in a slightly
implicit way and up to an easy power of $2$, in an article of Ludwig Seidel
from 1877 \cite{seidel}, as the main diagonal of the difference table
of the Bernoulli numbers. This diagonal is highlighted using a bold
font in the table given there on page 181. Seidel proves that the two
diagonals below and above the main diagonal in the same table are
essentially given by the same sequence, up to multiplication by $2$.

Later, the first terms of the sequence $\ran$ appeared explicitly in
one of Srinivasa Ramanujan's notebooks (written between 1903 and
1914), as giving, up to a simple factor, the first coefficients in an
unusual asymptotic expansion for the harmonic numbers into powers of
the inverse of the triangular numbers $\binom{n+1}{2}$. This is
displayed in Bruce Berndt's edition of Ramanujan's notebooks as the
number (9) of \cite[Chapter 38]{berndtV}). It can also be noted that
the first term of this asymptotic expansion of Ramanujan was obtained
by Ernesto Cesàro in 1885 in \cite{cesaro}. The first complete proof
that the sequence $\ran$ describes this full asymptotic expansion was
given by Mark Villarino in \cite{villarino}, where an historical
account can be found.

The sequence $\ran$ has been considered again in 2005 by Kwang-Wu Chen
\cite{chenKW}, from a point of view close to that of Seidel. He
obtained a functional equation and a continued fraction for their
generating series. We will be more precise about his results later in
section \ref{main}.

The sequence $\rap$ has recently surfaced in a very different
algebraic context \cite{ch_series1, ch_series2} related to the notion
of pre-Lie algebra. There is a complete algebraic theory of
tree-indexed series, very similar to usual power series in one
variable, but where monomials are indexed by finite rooted
trees. These tree-indexed series can be multiplied (in a
non-associative way) but also composed (in an associative way). One
can therefore consider the group of tree-indexed series that are
invertible for the composition. This group contains a special element
$A$ with rational coefficients, which is a kind of tree-exponential
and has very simple coefficients. Its inverse $\Omega$ has more subtle
and interesting coefficients, among which the Bernoulli numbers $B_n$
for corollas and the numbers $\rap_n$ for another sequence of rooted
trees.

After these works on tree-indexed series, it has been understood in
\cite{ch_essouabri} that the sequence $\rap$ also appears in the values
at negative integers of some kind of non-standard $L$-function. More
details will be given in section \ref{L_function}.

The main aim of the present article, apart from advertising the
sequences $\rap$ and $\ran$, is to describe a new relationship between
these sequences and the moments of some classical families of
hypergeometric orthogonal polynomials, namely Racah polynomials. This
relationship in particular implies nice corollaries about continued
fractions and Hankel determinants, by the general theory of orthogonal
polynomials. We will not say more about this, because the exact
statements can be easily reconstructed.

This is very similar to the known relationship between the Bernoulli
numbers and another family of hypergeometric orthogonal polynomials,
namely Hahn polynomials. This is therefore still another way in which the
sequence $\rap$ is comparable with the Bernoulli numbers.

One word about terminology: there does not seem to be any accepted name for the sequences $\ran$ and $\rap$. The name ``median Bernoulli numbers'' is used for the diagonal of the difference table of Bernoulli numbers, which differs from $\ran$ by powers of $2$. We propose that the name of ``Ramanujan-Bernoulli numbers'' may be suitable for the sequence $\ran$ itself.







%

Let us end this introduction by some open questions.

First, it seems that the sequence $\ran$ alternates in sign. To the
best of our knowledge, this is not yet proved. It is also expected that
the associated non-standard $L$-function considered in section
\ref{L_function} has a simple zero between consecutive negative
integers.

There may exist $q$-analogues for some of the results of this
article, in the spirit of the continued fractions for
the $q$-Bernoulli numbers of Carlitz studied in \cite{ch_zeng}, but
they have so far remained elusive.

One can also wonder, in a very wild speculation, if there is, for the
sequences $\rap$ or $\ran$, something like the relationship between
Bernoulli numbers and the algebraic K-theory of the ring of integers.

\section{Definitions}

\subsection{Sequences $\ran$ and $\rap$}

\label{defini}

Let us now give the formal definition of $\ran$ and $\rap$.

Let us first introduce the classical Bernoulli numbers $B_n$, defined by
\begin{equation}
  \label{eq:ber}
  \frac{t}{e^t-1} = \sum_{n\geq 0} B_n \frac{t^n}{n!}.
\end{equation}

Define a linear form $\Psi$ on the vector space
$\QQ[x]$ of polynomials in one variable $x$ with rational coefficients, by
\begin{equation}
  \Psi(x^n) = B_n,
\end{equation}
for all $n \geq 0$.

The sequence $\rap$ is defined by
\begin{equation}
  \rap_n = \Psi\left(\binom{x+2}{2}^n\right),
\end{equation}
and the companion sequence $\ran$ by
\begin{equation}
  \ran_n = \Psi\left(\binom{x+1}{2}^n\right),
\end{equation}
for all $n \geq 0$. For example,
\begin{equation*}
  \rap_6 = \frac{191}{30030} = 1 -\frac{9}{2} + \frac{49}{8} -\frac{157}{32} + \frac{8989}{2688} -\frac{157}{128} + \frac{245}{1408} -\frac{691}{174720}.
\end{equation*}
and
\begin{equation*}
  \ran_6 = \frac{191}{30030} = \frac{1}{2688} -\frac{1}{128} + \frac{25}{1408} -\frac{691}{174720}.
\end{equation*}

The fact that $\rap_n$ and $\ran_n$ are the same except when $n=1$
follows from the next lemma.
\begin{lemma}
  \label{ran_rap}
  For every $n \not=1$, $\Psi(x^n(x+1)^n) = \Psi((x+1)^n(x+2)^n)$.
\end{lemma}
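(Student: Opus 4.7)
The plan is to reduce the lemma to the vanishing of a single coefficient by exploiting how $\Psi$ behaves under the shift $x\mapsto x+1$.

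First I would prove the auxiliary identity
\[
  \Psi(f(x+1)) - \Psi(f(x)) = [x^1]\,f(x)
\]
for every polynomial $f \in \QQ[x]$, where $[x^1]f(x)$ denotes the coefficient of $x$ in $f$. By linearity, it suffices to check this on $f(x)=x^k$, i.e.\ to compute $\Psi((x+1)^k)-\Psi(x^k) = \sum_{j=0}^{k}\binom{k}{j}B_j - B_k$. The sum $\sum_{j=0}^{k}\binom{k}{j}B_j$ equals $(-1)^k B_k$, a classical consequence of the symmetry $\frac{t\,e^t}{e^t-1} = \frac{-t}{e^{-t}-1}$ applied to the generating function in~\eqref{eq:ber}. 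Hence $\Psi((x+1)^k)-\Psi(x^k) = ((-1)^k-1)B_k$, which vanishes for even $k$ (both factors) and for odd $k\geq 3$ (since then $B_k=0$), and equals $1$ for $k=1$ (using $B_1 = -1/2$). This is exactly $[x^1]\,x^k$.

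Next I would apply this identity to $f(x) = x^n(x+1)^n$, noting that $f(x+1) = (x+1)^n(x+2)^n$. The lemma is then equivalent to the statement that the coefficient of $x$ in $x^n(x+1)^n$ vanishes whenever $n\neq 1$. This is immediate: for $n=0$ the polynomial is $1$, and for $n\geq 2$ every term of $x^n(x+1)^n$ has degree at least $n\geq 2$. For $n=1$ one has $x(x+1)=x^2+x$, whose coefficient of $x$ is $1$, which is why the case $n=1$ must be excluded (and this also quantifies the discrepancy $\rap_1 - \ran_1 = 1/2$).

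The only nontrivial step is the auxiliary identity, and its proof is essentially a repackaging of the standard Bernoulli recurrence $\sum_{k=0}^{n-1}\binom{n}{k}B_k = 0$ (for $n\geq 2$), so no real obstacle is expected.
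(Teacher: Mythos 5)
Your proof is correct, but it follows a genuinely different route from the paper. The paper works directly with the difference of the two polynomials, expanding
\begin{equation*}
  ((x+1)(x+2))^n - (x(x+1))^n = 2 (x+1)^n \sum_{\substack{k=1\\k \text{ odd}}}^{n} \binom{n}{k} (x+1)^{n-k},
\end{equation*}
observing that for $n \geq 2$ this is a combination of odd powers $(x+1)^m$ with $m \geq 3$, and killing each term by the vanishing of the odd Bernoulli numbers (via $\Psi((x+1)^m) = (-1)^m B_m$). You instead isolate a general shift identity, $\Psi(f(x+1)) - \Psi(f(x)) = [x^1]f(x)$, verified on monomials using the reflection $\sum_{j}\binom{k}{j}B_j = (-1)^k B_k$, and then the lemma reduces to the triviality that $x^n(x+1)^n$ has no linear term unless $n=1$. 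Both arguments ultimately rest on the same two facts about Bernoulli numbers (the vanishing of $B_{2k+1}$ for $k \geq 1$ and the recurrence $\sum_{j<m}\binom{m}{j}B_j=0$), but your packaging is more structural: the shift identity is reusable for any polynomial of the form $f(x)$ versus $f(x+1)$, it makes completely transparent why $n=1$ is the unique exception, and it quantifies the discrepancy ($\rap_1 - \ran_1 = \tfrac{1}{2}$, after accounting for the factor $\tfrac{1}{2}$ in $\binom{\cdot}{2}$). The paper's computation, by contrast, is shorter and entirely self-contained, needing no auxiliary lemma. Your argument is complete as stated.
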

\begin{proof}
  The case $n=0$ is trivial, so that one can assume $n \geq 2$. Let us
  compute the difference
  \begin{equation*}
    ((x+1)(x+2))^n - (x(x+1))^n = 2 (x+1)^n \left( \sum_{\substack{k=0\\k\equiv 1 (2)}}^n \binom{n}{k} (x+1)^{n-k}          \right).
  \end{equation*}
  This is a linear combination
  of odd powers of $(x+1)$, excluding $x+1$. The image by $\Psi$
  therefore vanishes, because the involved Bernoulli numbers are zero.
\end{proof}

\subsection{Racah's orthogonal polynomials}

The orthogonal polynomials of Racah are defined by (see \cite[\S 1.2]{koekoek_report}):
\begin{equation}
  \label{racah_hyper}
  R_n(\lambda(x)\,;\,\alpha, \beta, \gamma, \delta)
  = \pFq{4}{3}{-n, n+\alpha+\beta+1,-x,x+\gamma+\delta+1}{\alpha+1,\beta+\delta+1,\gamma+1},
\end{equation}
where $\lambda(x) = x (x+\gamma+\delta+1)$ and using the standard
notation for hypergeometric functions. The parameters
$\alpha,\beta,\gamma$ and $\delta$ will remain implicit in all the
notations.

Using the usual Pochhammer symbol $(x)_k = x(x+1)\cdots(x+k-1)$, the
hypergeometric series above is given explicitly by the finite sum
\begin{equation}
  \label{racah_explicit}
  \sum_{k = 0}^{n} \frac{(-n)_k (n+\alpha+\beta+1)_k (-x)_k (x+\gamma+\delta+1)_k}{(\alpha+1)_k (\beta+\delta+1)_k (\gamma+1)_k k!}.
\end{equation}

The polynomials $R_n$ are not monic in general. Setting
\begin{equation}
  R_n(x) = \frac{(n+\alpha+\beta+1)_n}{(\alpha+1)_n (\beta+\delta+1)_n (\gamma+1)_n} p_n(x),
\end{equation}
one obtains the corresponding family of monic orthogonal polynomials.

By the general theory of orthogonal polynomials and in particular by
Favard's lemma, there exists a unique linear form $\Lambda$ on the
vector space of polynomials in $x$ such that $\Lambda(1) = 1$ and
$\Lambda(p_n) = 0$ for $n > 0$. Then the moments of the family of
orthogonal polynomials are given by $\Lambda(x^n)$ for $n\geq 0$.

Note that the linear form $\Lambda$ is also characterized by
$\Lambda(1)=1$ and $\Lambda(R_n)=0$ for $n > 0$. Hence there is no
need to consider the monic orthogonal polynomials.

\section{Main theorems}

\label{main}

In this section, we will state five similar theorems, saying that some
sequences are the sequences of moments of some specific families of
Racah polynomials.

\begin{theorem}
  \label{th_nega_zero}
  The numbers $(2^n \ran_n)_{n \geq 0}$ are the moments of the orthogonal
  polynomials $R_n$ of parameters
  $(\alpha, \beta, \gamma, \delta) = (0, -1/2, 0, 0)$.
\end{theorem}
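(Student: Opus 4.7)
The plan is to use the characterization of the Racah functional recalled in \S 2.2: the linear form $\Lambda$ on polynomials in $\lambda$ is uniquely determined by $\Lambda(1)=1$ and $\Lambda(R_n)=0$ for $n\geq 1$. I will therefore exhibit the candidate $\Lambda$ via $\Psi$ and verify these two conditions.

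With $(\alpha,\beta,\gamma,\delta)=(0,-1/2,0,0)$ one has $\lambda(x)=x(x+1)=2\binom{x+1}{2}$, so the linear form $L$ on polynomials in $\lambda$ defined by $L(P(\lambda)):=\Psi(P(\lambda(x)))$ satisfies $L(\lambda^n)=2^n\ran_n$ directly from the definition of $\ran_n$. The theorem will follow once $\Psi(R_n(\lambda(x)))=0$ is established for every $n\geq 1$.

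To do this I would apply $\Psi$ termwise to the explicit sum \eqref{racah_explicit}, which reduces the problem to the single closed form
\[
  \Psi\bigl((-x)_k(x+1)_k\bigr) = \frac{(k!)^2}{2k+1}.
\]
This is the technical heart of the argument. I would prove it from the formal-series identity
\[
  \sum_{m\geq 0}\Psi\bigl((x)_m\bigr)\frac{t^m}{m!} \;=\; \Psi\bigl((1-t)^{-x}\bigr) \;=\; -\frac{(1-t)\log(1-t)}{t},
\]
obtained by substituting $-\log(1-t)$ into the defining series \eqref{eq:ber} of the $B_n$. Rewriting $(-x)_k(x+1)_k = (-1)^k(x+1-k)_{2k}$ and extracting the coefficient of $t^{2k}$ in the shifted generating function $-(1-t)^k\log(1-t)/t$ yields a binomial sum which, via the Beta integral $\int_0^1 u^k(1-u)^k\,du = (k!)^2/(2k+1)!$, collapses to the stated formula.

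Plugging this back and using the elementary identity $(1/2)_k(2k+1)=(3/2)_k$ telescopes the sum to
\[
  \Psi(R_n(\lambda(x))) \;=\; \pFq{2}{1}{-n,\,n+\tfrac{1}{2}}{\tfrac{3}{2}}.
\]
Since $c-a-b = \tfrac{3}{2}-(-n)-(n+\tfrac{1}{2}) = 1>0$, Gauss's summation theorem evaluates this to $\Gamma(\tfrac{3}{2})\Gamma(1)/\bigl(\Gamma(n+\tfrac{3}{2})\Gamma(1-n)\bigr)$, which vanishes for every $n\geq 1$ because of the pole of $\Gamma(1-n)$; for $n=0$ it equals $1$, consistent with $\Psi(R_0)=1$. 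The main obstacle is the Bernoulli moment computation of $\Psi((-x)_k(x+1)_k)$; once it is available, the remainder of the proof is a one-line application of Gauss's theorem.
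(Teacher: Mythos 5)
Your proposal is correct and its overall architecture coincides with the paper's proof: reduce the claim, via the characterisation of $\Lambda$, to showing $\Psi(R_n(x(x+1)))=0$ for $n\geq 1$, apply $\Psi$ termwise to the explicit sum \eqref{racah_explicit}, and recognise a terminating ${}_2F_1$ at $1$ that vanishes (your appeal to Gauss's theorem and the pole of $\Gamma(1-n)$ is the same evaluation as the paper's Chu--Vandermonde step, since the series terminates). The one genuine difference is how you establish the key evaluation $\Psi\bigl((-x)_k(x+1)_k\bigr)=(k!)^2/(2k+1)$. The paper obtains it as the special case $(d,e,i,j)=(k,k,k-1,k)$ of \Cref{eval_psi}, which is in turn quoted from \cite{ch_essouabri}; you instead derive it from scratch from the generating function
\[
\sum_{m\geq 0}\Psi\bigl((x)_m\bigr)\frac{t^m}{m!}=-\frac{(1-t)\log(1-t)}{t},
\]
which follows from \eqref{eq:ber} under the substitution $u=-\log(1-t)$, combined with the rewriting $(-x)_k(x+1)_k=(-1)^k(x+1-k)_{2k}$ and the Beta integral. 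This computation checks out: the coefficient extraction gives $\sum_{j=0}^{k}\binom{k}{j}(-1)^j/(2k+1-j)=(-1)^k(k!)^2/(2k+1)!$, and multiplying by $(2k)!$ and by the sign $(-1)^k$ yields exactly $(k!)^2/(2k+1)$, after which $(1/2)_k(2k+1)=(3/2)_k$ produces the stated ${}_2F_1$. What your route buys is self-containedness---no external evaluation lemma is needed---at the cost of a slightly longer computation; the paper's route buys uniformity, since \Cref{eval_psi} with varying parameters (and its companion \Cref{eval_psi_deux}) also powers the proofs of the four analogous theorems that follow.
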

\begin{proof}
  We want to prove that
  \begin{equation*}
    \mu_n = \Lambda(x^n) = \Psi(x^n(x+1)^n) = 2^n \ran_n,
  \end{equation*}
  for all $n \geq 0$. By the characterisation of the linear form
  $\Lambda$, one just needs to check that $\Psi(R_n(x(x+1))$ does
  vanish when $n > 0$ and takes the value $1$ at $n=0$. The second
  point is clear because $R_0=1$. We can use the explicit hypergeometric expression
  \eqref{racah_explicit} at the given parameters:
  \begin{equation*}
    R_n(x(1+x)) = \sum_{k=0}^{n} \frac{(-n)_k (n+1/2)_k (-x)_k (x+1)_k}{(1/2)_k k!^3}.
  \end{equation*}
  Applying $\Psi$ and using \Cref{eval_psi} with parameters
  $(d,e,i,j)=(k,k,k-1,k)$ gives
  \begin{equation*}
    \sum_{k=0}^{n} \frac{(-n)_k (n+1/2)_k}{(1/2)_k k! (2k+1)} = \pFq{2}{1}{-n,n+1/2}{3/2},
  \end{equation*}
  which is indeed $0$ for every $n \geq 1$ by the Chu-Vandermonde identity.
\end{proof}

We could get rid, in this theorem and the next four ones, of the
factor $2^n$ in the sequences of moments, by scaling the variable in
the orthogonal polynomials. Using the classical relationship between
orthogonal polynomials, continued fractions and Hankel determinants
(see for example \cite{ch_zeng}), one can deduce from the previous
theorem (and similarly for the next four theorems) a nice continued
fraction for the ordinary generating series of the sequence $\ran$,
and an explicit factorisation of the Hankel determinants made from
$\ran$. Both these results have been proved before by Kwang-Wu Chen in
\cite[\S 5]{chenKW}, without the connection with orthogonal
polynomials.

\begin{theorem}
  \label{th_nega_un}
  The numbers $(2^{n} \ran_{n+1}/\ran_1)_{n \geq 0}$ are the moments of the orthogonal polynomials $R_n$ of parameters $(\alpha, \beta, \gamma, \delta) = (-1/2,1,0,0)$.
\end{theorem}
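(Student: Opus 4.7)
The plan is to mimic the proof of \Cref{th_nega_zero}. Writing $\mu_n := 2^n\ran_{n+1}/\ran_1$, I first rewrite this sequence in terms of $\Psi$. Since $\ran_1 = \Psi(x(x+1))/2$ and $\ran_{n+1} = \Psi((x(x+1))^{n+1})/2^{n+1}$, one gets
$$\mu_n = \frac{\Psi\bigl(x(x+1)\cdot (x(x+1))^n\bigr)}{\Psi(x(x+1))}.$$
Therefore the linear form $\Lambda'$ on $\QQ[y]$ defined by $\Lambda'(P) = \Psi(x(x+1)\,P(x(x+1)))/\Psi(x(x+1))$ satisfies $\Lambda'(y^n) = \mu_n$, and the goal reduces to showing $\Lambda' = \Lambda$. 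By the characterization recalled at the end of \S 1.2, and since $\Lambda'(1) = 1$ is automatic, it is enough to prove
$$\Psi\bigl(x(x+1)\,R_n(x(x+1))\bigr) = 0 \qquad (n \geq 1).$$

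With $(\alpha,\beta,\gamma,\delta) = (-1/2, 1, 0, 0)$, the denominator Pochhammer symbols in \eqref{racah_explicit} become $(1/2)_k$, $(2)_k = (k+1)!$, and $(1)_k = k!$, so
$$R_n(x(x+1)) = \sum_{k=0}^{n} \frac{(-n)_k\,(n+3/2)_k\,(-x)_k\,(x+1)_k}{(1/2)_k\,(k+1)!\,(k!)^2}.$$
I would multiply by $x(x+1)$ and apply $\Psi$ term by term, invoking \Cref{eval_psi} with the appropriate parameter choice to evaluate $\Psi\bigl(x(x+1)(-x)_k(x+1)_k\bigr)$. The identity $(x-j)(x+j+1) = x(x+1) - j(j+1)$ makes the product telescope cleanly and the evaluation has the compact shape $-\,(k+1)!\,k!/((2k+1)(2k+3))$.

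Substituting this back and simplifying the half-integer Pochhammer symbols via the elementary relations $(1/2)_k(2k+1) = (3/2)_k$ and $(3/2)_k(2k+3) = 3\,(5/2)_k$, the sum collapses to a scalar multiple of
$$\pFq{2}{1}{-n,\,n+3/2}{5/2},$$
which vanishes for every $n \geq 1$ by the Chu-Vandermonde identity. The chief technical point is obtaining the closed form for $\Psi(x(x+1)(-x)_k(x+1)_k)$ from \Cref{eval_psi}; the parameter choice $(-1/2,1,0,0)$ is tuned precisely so that the extra factor $x(x+1)$, combined with the denominator shifts, lands the resulting hypergeometric sum on the single free parameter of Chu-Vandermonde. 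Beyond this, everything is routine bookkeeping of Pochhammer symbols, exactly as in \Cref{th_nega_zero}.
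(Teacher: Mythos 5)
Your proposal is correct and follows essentially the same route as the paper: reduce to showing $\Psi(x(x+1)R_n(x(x+1)))=0$ for $n\geq 1$, evaluate term by term via \Cref{eval_psi} (the paper makes your "appropriate parameter choice" explicit by rewriting $x(x+1)(-x)_k(x+1)_k = -(-x-1)_{k+1}(x)_{k+1}$ and taking $(d,e,i,j)=(k+1,k+1,k-1,k)$, which indeed yields your closed form $-(k+1)!\,k!/((2k+1)(2k+3))$), and collapse the sum to $(-1/3)\,\pFq{2}{1}{-n,n+3/2}{5/2}$, which vanishes by Chu--Vandermonde.
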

\begin{proof}
  We want to prove that
  \begin{equation*}
    \mu_n = \Lambda(x^n) = \frac{1}{2 \ran_1}\Psi(x^{n+1}(x+1)^{n+1}) = 2^{n} \ran_{n+1} / \ran_1,
  \end{equation*}
  for all $n \geq 0$. By the characterisation of the linear form $\Lambda$, one
  just needs to compute
  \begin{equation*}
    \Psi(x(x+1)R_n(x(x+1))),
  \end{equation*}
  whose value at $n=0$ is $2 \ran_1$. We can use
  the explicit hypergeometric expression \eqref{racah_explicit} at the given
  parameters:
  \begin{equation*}
    R_n(x(x+1)) = \sum_{k=0}^{n} \frac{(-n)_k (n+3/2)_k (-x)_k (x+1)_k}{(1/2)_k (k+1)! k!^2},
  \end{equation*}
  and therefore
  \begin{equation*}
    x (x+1) R_n(x(x+1)) = - \sum_{k=0}^{n} \frac{(-n)_k (n+3/2)_k (-x-1)_{k+1} (x)_{k+1}}{(1/2)_k (k+1)! k!^2}.
  \end{equation*}
  Applying $\Psi$ and using \Cref{eval_psi} with parameters
  $(d,e,i,j)=(k+1,k+1,k-1,k)$ gives
  \begin{equation*}
    - \sum_{k=0}^{n} \frac{(-n)_k (n+3/2)_k}{(1/2)_k k! (2k+3)(2k+1)} = 
 (-1/3)\, \pFq{2}{1}{-n,n+3/2}{5/2},
  \end{equation*}
  which is once again $0$ for every $n \geq 1$ by the Chu-Vandermonde identity.
\end{proof}

\begin{theorem}
  \label{th_posi_un}
  The numbers $(2^n \rap_{n+1}/\rap_1)_{n \geq 0}$ are the moments of the orthogonal polynomial $R_n$ of parameters $(\alpha, \beta, \gamma, \delta) = (0,1/2,0,-2)$.
\end{theorem}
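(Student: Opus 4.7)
The plan is to follow the template of Theorems \ref{th_nega_zero} and \ref{th_nega_un}, specialised to $\lambda(x) = x(x-1)$. First I would establish that $\Psi((x(x-1))^{n+1}) = 2^{n+1}\rap_{n+1}$ for every $n \geq 0$, by iterating the shift formula $\Psi(P(x+1)) = \Psi(P(x)) + P'(0)$ applied to $P(x) = (x(x-1))^{n+1}$: this polynomial is invariant under the involution $x \mapsto 1-x$, which forces the boundary contributions $P'(0)$ and $P'(1)$ from the two-step shift by $+2$ to cancel. Hence $\Psi((x(x-1))^{n+1}) = \Psi(((x+1)(x+2))^{n+1}) = 2^{n+1}\rap_{n+1}$, and in particular $\Psi(x(x-1)) = 2\rap_1$. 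By the characterisation of $\Lambda$, it then suffices to prove that $\Psi(x(x-1) R_n(x(x-1); 0, 1/2, 0, -2))$ vanishes for every $n \geq 1$.

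Expanding $R_n$ via \eqref{racah_explicit} reduces the task to evaluating $b_k := \Psi(x(x-1)(-x)_k(x-1)_k)$ for each $k$. For $k \geq 1$, the integrand has a double zero at $x = 0$, so the shift formula yields $b_k = \Psi(x(x+1)(-x-1)_k(x)_k)$. I would then invoke the Pochhammer identity
\[
(-x-1)_{k+1}(x)_{k+1} = \bigl(k(k-1) - x(x+1)\bigr)(-x-1)_k(x)_k,
\]
which rearranges into $x(x+1)(-x-1)_k(x)_k = k(k-1)(-x-1)_k(x)_k - (-x-1)_{k+1}(x)_{k+1}$. Applying \Cref{eval_psi} to each of the two Pochhammer products on the right (the second of which is exactly the evaluation already invoked in the proof of Theorem \ref{th_nega_un}) and performing a short algebraic simplification should give
\[
b_k = -\frac{2(k!)^2}{(2k-1)(2k+1)(2k+3)},
\]
a formula which extends consistently to $k = 0$, giving $b_0 = 2/3 = 2\rap_1$.

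Finally, using the identity $\frac{(-1/2)_k}{(5/2)_k} = \frac{-3}{(2k-1)(2k+1)(2k+3)}$, the sum collapses to $\frac{2}{3}\,\pFq{2}{1}{-n,n+3/2}{5/2}$, which by the Chu-Vandermonde identity vanishes for every $n \geq 1$. The hard part will be the evaluation of $b_k$ itself: unlike in Theorem \ref{th_nega_un}, where multiplication by $x(x+1)$ cleanly extends both $(-x)_m$ and $(x+1)_m$ into a single Pochhammer product, the factor $x(x-1) = (-x)(-x+1)$ would duplicate the first two factors of $(-x)_k$. The remedy above—first shift $x \mapsto x+1$ (legitimate since the integrand has a double zero at $0$ for $k \geq 1$), then split via the Pochhammer identity—requires two separate applications of \Cref{eval_psi}.
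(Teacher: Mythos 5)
Your argument is correct and follows the paper's overall template (realise $\Lambda$ through $\Psi$, expand $R_n$ via \eqref{racah_explicit}, evaluate termwise, recognise a terminating ${}_2F_1$ and kill it with Chu--Vandermonde), but both technical ingredients are handled by genuinely different means. First, you work with the substitution $\lambda = x(x-1)$ and therefore need the preliminary identity $\Psi((x(x-1))^{n+1}) = 2^{n+1}\rap_{n+1}$, which you correctly derive from the shift formula $\Psi(P(x+1))=\Psi(P(x))+P'(0)$ and the symmetry $x\mapsto 1-x$; the paper instead substitutes $\lambda=(x+1)(x+2)$, for which this identification is immediate from the definition of $\rap$, at the price of slightly heavier Pochhammer bookkeeping. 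Second, and more substantially, the paper evaluates the termwise image under $\Psi$ by a dedicated lemma (\Cref{eval_psi_deux}), itself proved by expanding $\binom{x+2}{2}\binom{x+2}{k}$ into three shifted binomials and applying \eqref{CE_lemme12} three times; your route --- shift by $1$ using the double zero at the origin, split $x(x+1)(-x-1)_k(x)_k$ via the identity $(-x-1)_{k+1}(x)_{k+1}=\bigl(k(k-1)-x(x+1)\bigr)(-x-1)_k(x)_k$, and apply \Cref{eval_psi} twice --- bypasses \Cref{eval_psi_deux} entirely and reuses only the evaluation already needed in \Cref{th_nega_un}. Your value $b_k=-2(k!)^2/\bigl((2k-1)(2k+1)(2k+3)\bigr)$ agrees with what \Cref{eval_psi_deux} yields after clearing the binomial normalisations, and the final reduction to $(2/3)\,{}_2F_1(-n,n+3/2;5/2;1)$ matches the paper exactly. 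Two small points should be made explicit in a full write-up: the first application of \Cref{eval_psi} (to $(-x-1)_k(x)_k$, i.e.\ $i=k-2$) is only legitimate for $k\geq 2$, which is harmless solely because its coefficient $k(k-1)$ vanishes for $k\leq 1$; and the case $k=0$ of the formula for $b_k$ must be checked directly, as you indeed note.
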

\begin{proof}
  We want to prove that
  \begin{equation*}
    \mu_n = \Lambda(x^n) = \frac{1}{2 \rap_1}\Psi((x+1)^{n+1}(x+2)^{n+1}) = 2^{n} \rap_{n+1} / \rap_1,
  \end{equation*}
  for all $n \geq 0$. By the characterisation of the linear form $\Lambda$, one
  just needs to compute
  \begin{equation*}
    \Psi((x+1)(x+2)R_n((x+1)(x+2))),
  \end{equation*}
  whose value at $n=0$ is $2 \rap_1$. We can use
  the explicit hypergeometric expression \eqref{racah_explicit} at the given
  parameters:
  \begin{equation*}
    R_n(x(x-1)) = \sum_{k=0}^{n} \frac{(-n)_k (n+3/2)_k (-x)_k (x-1)_k}{(-1/2)_k k!^3},
  \end{equation*}
  and therefore
  \begin{multline*}
    (x+1) (x+2) R_n((x+1)(x+2)) =\\ \sum_{k=0}^{n} \frac{(-n)_k (n+3/2)_k (x+1)(x+2) (-x-2)_{k} (x+1)_{k}}{(-1/2)_k k!^3}.
  \end{multline*}
  Applying $\Psi$ and using \Cref{eval_psi_deux} at $k$ gives
  \begin{equation*}
    -2 \sum_{k=0}^{n} \frac{(-n)_k (n+3/2)_k}{(-1/2)_k k! (2k+3)(2k+1)(2k-1)} =
   (2/3)\, \pFq{2}{1}{-n,n+3/2}{5/2},
  \end{equation*}
  which is once again $0$ for every $n \geq 1$ by the Chu-Vandermonde identity.
\end{proof}

Recall that $\rap_n = \ran_n$ for every $n \geq 2$ by \Cref{ran_rap}. The two following statements therefore also hold with $\ran$ replacing $\rap$.

\begin{theorem}
  \label{th_posi_deux}
  The numbers $(2^n \rap_{n+2}/\rap_{2})_{n \geq 0}$ are the moments of the orthogonal polynomials $R_n$ of parameters $(\alpha, \beta, \gamma, \delta) = (-1/2,2,1,-1)$.
\end{theorem}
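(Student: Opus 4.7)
The plan is to mirror the proofs of Theorems \ref{th_nega_zero}--\ref{th_posi_un} step by step. Since $\rap_m = \Psi((x+1)^m(x+2)^m)/2^m$, the desired identity $\mu_n = 2^n \rap_{n+2}/\rap_2$ is equivalent, via the characterisation $\Lambda(1)=1$ and $\Lambda(R_n)=0$ for $n \geq 1$, to showing
\begin{equation*}
\Psi\bigl((x+1)^2(x+2)^2\, R_n((x+1)(x+2))\bigr) = 0 \quad \text{for all } n \geq 1,
\end{equation*}
with the normalisation $\Psi((x+1)^2(x+2)^2) = 4\rap_2$ at $n=0$ being immediate.

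At parameters $(-1/2,2,1,-1)$ one has $\alpha+\beta+1 = 5/2$ and $\gamma+\delta+1=1$, so $\lambda(x)=x(x+1)$. Substituting $x \mapsto x+1$ in \eqref{racah_explicit} gives
\begin{equation*}
R_n((x+1)(x+2)) = \sum_{k=0}^{n} \frac{(-n)_k (n+5/2)_k (-x-1)_k (x+2)_k}{(1/2)_k\,((k+1)!)^2\,k!}.
\end{equation*}
The crux is then to evaluate $\Psi\bigl((x+1)^2(x+2)^2 (-x-1)_k (x+2)_k\bigr)$. Using the factorisation $(-x-1+j)(x+2+j) = -\bigl((x+1)(x+2)-j(j+1)\bigr)$, this quantity becomes the $\Psi$-image of a degree-$(k+2)$ polynomial in $u=(x+1)(x+2)$, hence a computable linear combination of $\rap_2,\dots,\rap_{k+2}$.

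I expect the clean closed form
\begin{equation*}
\Psi\bigl((x+1)^2(x+2)^2 (-x-1)_k (x+2)_k\bigr) = \frac{2\,((k+1)!)^2}{(2k+1)(2k+3)(2k+5)},
\end{equation*}
either as a direct instance of \Cref{eval_psi_deux} with suitable parameters, or via a mildly generalised variant. Inserting this into the sum and using $(1/2)_k(2k+1)(2k+3)(2k+5) = 15\,(7/2)_k$ collapses everything to
\begin{equation*}
\frac{2}{15}\,\pFq{2}{1}{-n,\,n+5/2}{7/2},
\end{equation*}
which vanishes for every $n \geq 1$ by the Chu-Vandermonde identity (and equals $2/15 = 4\rap_2/1$ at $n=0$, matching the normalisation).

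The main obstacle is this $\Psi$-evaluation: while the preceding theorems make the expected form very natural, one has to either match parameters in one of the existing evaluation lemmas or prove a small dedicated variant, paying attention to the shift $x \mapsto x+1$ and to the signs introduced by $(-x-1)_k$. Once the right lemma is in hand, the rest is a routine hypergeometric simplification identical in structure to the three previous proofs.
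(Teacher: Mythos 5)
Your proposal is correct and follows essentially the same route as the paper: same hypergeometric expansion of $R_n((x+1)(x+2))$, same reduction to $(2/15)\,\pFq{2}{1}{-n,n+5/2}{7/2}$, and the same appeal to Chu--Vandermonde. The closed form you ``expect'' is exactly right and is indeed a direct instance of \Cref{eval_psi_deux} at $k+1$, via the identity $(x+1)^2(x+2)^2(-x-1)_k(x+2)_k = -(x+1)(x+2)(-x-2)_{k+1}(x+1)_{k+1}$, which is precisely the step the paper takes.
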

\begin{proof}
  We want to prove that
  \begin{equation*}
    \mu_n = \Lambda(x^n) = \frac{1}{4 \rap_2}\Psi((x+1)^{n+2}(x+2)^{n+2}) = 2^{n} \rap_{n+2} / \rap_2,
  \end{equation*}
  for all $n \geq 0$. By the characterisation of the linear form $\Lambda$, one
  just needs to compute
  \begin{equation*}
    \Psi((x+1)^2(x+2)^2R_n((x+1)(x+2))),
  \end{equation*}
  whose value at $n=0$ is $4 \rap_2$. We can use the explicit
  hypergeometric expression \eqref{racah_explicit} at the given
  parameters:
  \begin{equation*}
    R_n((x+1)(x+2)) = \sum_{k=0}^{n} \frac{(-n)_k (n+5/2)_k (-x-1)_k (x+2)_k}{(1/2)_k (k+1)!^2 k!},
  \end{equation*}
  and therefore
  \begin{multline*}
    (x+1)^2 (x+2)^2 R_n((x+1)(x+2)) =\\ - \sum_{k=0}^{n} \frac{(-n)_k (n+5/2)_k (x+1)(x+2) (-x-2)_{k+1} (x+1)_{k+1}}{(1/2)_k (k+1)!^2 k!}.
  \end{multline*}
  Applying $\Psi$ to this expression and using \Cref{eval_psi_deux} at $k+1$ gives
  \begin{equation*}
    2 \sum_{k=0}^{n} \frac{(-n)_k (n+5/2)_k}{(1/2)_k k! (2k+1)(2k+3)(2k+5)} =
  (2/15)\, \pFq{2}{1}{-n,n+5/2}{7/2},
  \end{equation*}
  which is once again $0$ for every $n \geq 1$ by the Chu-Vandermonde identity.
\end{proof}

\begin{theorem}
  The numbers $(2^n \rap_{n+3}/\rap_3)_{n \geq 0}$ are the moments of the orthogonal
  polynomials $R_n$ of parameters
  $(\alpha, \beta, \gamma, \delta) = (2,1/2,2,0)$ evaluated at $x-2$.
\end{theorem}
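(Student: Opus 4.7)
The plan is to follow the template of the four preceding theorems. The goal is to prove
\begin{equation*}
  \mu_n = \Lambda(x^n) = \frac{1}{8 \rap_3}\Psi((x+1)^{n+3}(x+2)^{n+3}) = 2^{n} \rap_{n+3}/\rap_3
\end{equation*}
for all $n \geq 0$. By the characterisation of $\Lambda$, this reduces to showing that the quantity $\Psi((x+1)^3(x+2)^3 R_n((x+1)(x+2)-2))$ equals $8\rap_3$ at $n=0$ (immediate from the definition of $\rap_3$) and vanishes for every $n \geq 1$.

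The key observation, which explains the ``evaluated at $x-2$'' clause in the statement, is that for the parameters $(2,1/2,2,0)$ one has $\lambda(x) = x(x+3)$ and therefore
\begin{equation*}
  (x+1)(x+2) - 2 = x^2 + 3x = \lambda(x).
\end{equation*}
Consequently $R_n((x+1)(x+2)-2)$ is nothing but the standard Racah polynomial $R_n(\lambda(x))$ in the $\Psi$-variable $x$, so formula \eqref{racah_explicit} directly gives
\begin{equation*}
  R_n(\lambda(x)) = \sum_{k=0}^{n} \frac{(-n)_k (n+7/2)_k (-x)_k (x+3)_k}{(3)_k^2 (3/2)_k \, k!}.
\end{equation*}

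The next step will be to establish the Pochhammer identity
\begin{equation*}
  (x+1)^3(x+2)^3 (-x)_k (x+3)_k = (x+1)(x+2)(-x-2)_{k+2}(x+1)_{k+2},
\end{equation*}
obtained by merging one factor $(x+1)(x+2)$ with $(x+3)_k$ to form $(x+1)_{k+2}$ and another copy of $(x+1)(x+2)$ with $(-x)_k$ to form $(-x-2)_{k+2}$. The right-hand side is exactly the pattern handled by \Cref{eval_psi_deux} at the index $k+2$, the same lemma already used in the proofs of \Cref{th_posi_un} and \Cref{th_posi_deux}. After its application, the factor $(3)_k^2$ cancels against the emerging $(k+2)!^2 = 4(3)_k^2$, the triple $(2k+3)(2k+5)(2k+7)$ combines with $(3/2)_k$ to produce $8(3/2)_{k+3}$, and the whole sum collapses to a rational multiple of $\pFq{2}{1}{-n,n+7/2}{9/2}$, which vanishes for every $n \geq 1$ by the Chu--Vandermonde identity. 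The main obstacle is the Pochhammer identity displayed above; once it is in hand, everything else is a routine adaptation of the proof of \Cref{th_posi_deux} with the single lemma index shifted from $k+1$ to $k+2$.
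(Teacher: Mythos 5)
Your proposal is correct and follows essentially the same route as the paper: the identification $x(x+3)=(x+1)(x+2)-2$, the regrouping $(x+1)^3(x+2)^3(-x)_k(x+3)_k=(x+1)(x+2)(-x-2)_{k+2}(x+1)_{k+2}$ (equivalent to the paper's factor $4/(k+2)!^2$ in place of $1/(3)_k^2$), \Cref{eval_psi_deux} at index $k+2$, and the collapse to $(-8/105)\,\pFq{2}{1}{-n,n+7/2}{9/2}$ killed by Chu--Vandermonde. All the stated intermediate identities check out.
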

\begin{proof}
  We want to prove that
  \begin{equation*}
    \mu_n = \Lambda(x^n) = \frac{1}{8 \rap_3}\Psi((x+1)^{n+3}(x+2)^{n+3}) = 2^{n} \rap_{n+3} / \rap_3,
  \end{equation*}
  for all $n \geq 0$. The linear form $\Lambda$ is characterized by the conditions
  \begin{equation*}
    \Lambda(R_n(x-2)) = 0
  \end{equation*}
  for all $n > 0$ and $1$ for $n=0$. We must therefore compute
  \begin{equation*}
    \Psi((x+1)^3(x+2)^3 R_n((x+1)(x+2)-2)),
  \end{equation*}
  whose value at $n=0$ is $8 \rap_3$. 

  Because $x(x+3) = (x+1)(x+2)-2$, one can use
  the explicit hypergeometric expression \eqref{racah_explicit} at the given
  parameters:
  \begin{equation*}
    R_n(x(x+3)) = \sum_{k=0}^{n} \frac{(-n)_k (n+7/2)_k (-x)_k (x+3)_k}{(3/2)_k (3)_k (3)_k k!},
  \end{equation*}
  and therefore
  \begin{multline*}
    (x+1)^3 (x+2)^3 R_n(x(x+3)) =\\ 4 \sum_{k=0}^{n} \frac{(-n)_k (n+7/2)_k (x+1) (x+2) (-x-2)_{k+2} (x+1)_{k+2}}{(3/2)_k (k+2)!^2 k!}.
  \end{multline*}
   Applying $\Psi$ to this expression and using \Cref{eval_psi_deux} at $k+2$  gives
  \begin{equation*}
    -8 \sum_{k=0}^{n} \frac{(-n)_k (n+7/2)_k}{(3/2)_k k! (2k+3)(2k+5)(2k+7)} =
  (-8/105)\, \pFq{2}{1}{-n,n+7/2}{9/2},
  \end{equation*}
  which is once again $0$ for every $n \geq 1$ by the Chu-Vandermonde identity.
\end{proof}

Up to the same factor of $2^n$, the sequence of polynomials in the
variable $u$ defined by
\begin{equation}
  \Psi(((x+u)(x+1-u))^n) = \Psi((x(x+1)+u(1-u))^n)
\end{equation}
can also be realised as a sequence of moments, by shifting by $u(1-u)$
the variable in the orthogonal polynomials used in \Cref{th_nega_zero}.

\section{Values of a non-standard $L$-function}

\label{L_function}

Let us consider the following analytic function
\begin{equation}
  \label{L2}
  L_2(s) = \sum_{n \geq 0} \frac{n+3/2}{\binom{n+2}{2}^s}.
\end{equation}

This has been studied in \cite{ch_essouabri}, as the special case
$P = \binom{x+2}{2}$ of the series
\begin{equation}
  \label{LP}
  L_P(s) = \sum_{n\geq 0} \frac{P'(n)}{P(n)^s}
\end{equation}
attached to polynomials with no roots at positive integers.

The formula \eqref{LP} is convergent in the right half-plane $\Re(s) > 1$. It is
known that $L_P$ admits an analytic continuation to a meromorphic
function on the entire complex plane, with only a simple pole at $1$
with residue $1$. Moreover, the values of $L_P$ at negative integers are rational numbers, given by
\begin{equation*}
  L_P(1-n) = -\Psi\left(P^n\right) / n,
\end{equation*}
for all integers $n \geq 1$. Here $\Psi$ is the linear form defined in \S \ref{defini}.

Therefore, the numbers $\rap_n$ are closely tied with values of $L_2$ at negative integers:
\begin{equation}
  L_2(1-n) =  - \rap_n / n,
\end{equation}
for all integers $n \geq 1$.


Let us conclude this section by a short remark on the analytic
continuation of the functions $L_P$, already proved in
\cite{ch_essouabri}. Here we give another sketch of argument for the analytic
continuation to a barely larger right half-plane, similar to a classical
argument for the zeta function, and useful for numerical computations.

Let us define a polynomial
$A(n)$ by the properties that $A(0)=1$ and
\begin{equation}
  A(n+1) - A(n) = P'(n)
\end{equation}
for all $n \geq 0$. Then
\begin{equation}
  L_P(s) - \frac{1}{s-1} = \sum_{n=0}^{\infty} \frac{P'(n)}{P(n)^s} - \int_{1}^{\infty} x^{-s} dx
\end{equation}
can be rewritten as
\begin{equation}
  \sum_{n=0}^{\infty} \int_{A(n)}^{A(n+1)} \frac{1}{P(n)^s} dx
- \sum_{n=0}^{\infty} \int_{A(n)}^{A(n+1)} x^{-s} dx.
\end{equation}
Collecting the terms, one gets
\begin{equation}
  \sum_{n=0}^{\infty} \int_{A(n)}^{A(n+1)} \left(P(n)^{-s} - x^{-s}\right) dx.
\end{equation}
Now the term of index $n$ can be bounded in such a way as to imply
that the sum is convergent when $\Re(s) > 1-1/d$, where $d$ is the
degree of $P$.


\appendix
\section{Evaluation lemmas}

This appendix contains two useful lemmas on the values of the
linear form $\Psi$ on specific families of polynomials.

\begin{lemma}
  \label{eval_psi}
  For all integers $d \geq 0$, $e \geq 0$, $0 \leq i \leq d-1$ and
  $0 \leq j \leq e$,
  \begin{equation}
    \Psi\left(\binom{-x+i}{d}\binom{x+j}{e}\right) = (-1)^{d+e-i-j-1} \frac{1}{(d+e+1) \binom{d+e}{i+j+1}}.
  \end{equation}
\end{lemma}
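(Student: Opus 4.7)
I would prove this by packaging the required values of $\Psi$ into a two-variable generating function and extracting a single coefficient. Set
\[
G_{i,j}(s,t) \;=\; \Psi\!\left((1+s)^{-x+i}(1+t)^{x+j}\right),
\]
with $\Psi$ acting coefficient-wise in the formal variables $s, t$. Binomial expansion gives $G_{i,j}(s,t) = \sum_{d,e \geq 0} \Psi\bigl(\binom{-x+i}{d}\binom{x+j}{e}\bigr)\, s^d t^e$, so the lemma reduces to the computation of $[s^d t^e]\, G_{i,j}$.

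The first step is to put $G_{i,j}$ in closed form. Rewriting the argument as $(1+s)^i(1+t)^j \cdot \bigl((1+t)/(1+s)\bigr)^x$ and using the identity $\Psi(a^x) = \log(a)/(a-1)$ (a direct consequence of $\Psi(e^{tx}) = t/(e^t-1)$) with $a = (1+t)/(1+s)$ yields
\[
G_{i,j}(s,t) \;=\; (1+s)^{i+1}(1+t)^{j}\;\frac{\log(1+t) - \log(1+s)}{t - s}.
\]

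The second step is to extract $[s^d t^e]\, G_{i,j}$. For this I use the integral representation
\[
\frac{\log(1+t) - \log(1+s)}{t - s} \;=\; \int_0^1 \frac{d\tau}{1 + (1-\tau)s + \tau t},
\]
expand the integrand as a geometric series in $(1-\tau)s + \tau t$, multiply by $(1+s)^{i+1}(1+t)^j$, and pick out the coefficient of $s^d t^e$. The double sum over the inner binomial indices, followed by evaluation of the $\tau$-integral via the Beta function, telescopes into the single integral $(-1)^{d+e+i+j+1}\int_0^1 x^{d+e-i-j-1}(1-x)^{i+j+1}\,dx$, which is precisely the claimed right-hand side. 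The hypotheses $0 \leq i \leq d-1$ and $0 \leq j \leq e$ are exactly what is needed for both exponents in this Beta integral to be non-negative.

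The main obstacle is not the algebra but a point of bookkeeping: one has to justify the manipulation of $\Psi$ on the formal series $(1+s)^{-x+i}(1+t)^{x+j}$. This is handled by working in $\QQ[x][[s,t]]$ and observing that $\Psi$ acts on each $s^d t^e$-coefficient (a polynomial in $x$) independently, so the closed-form expression for $G_{i,j}$ is an identity in $\QQ[[s,t]]$ and no analytic convergence issues arise. Once this is settled, every remaining step is routine Beta-function arithmetic.
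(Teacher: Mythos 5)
Your proof is correct, but it takes a genuinely different route from the paper. The paper disposes of this lemma in one line: it quotes the known evaluation $\Psi\bigl(\binom{x+i}{d}\binom{x+j}{e}\bigr) = (-1)^{d+e-i-j}\frac{1}{(d+e+1)\binom{d+e}{d-i+j}}$ from \cite[Lemme 1.2]{ch_essouabri} and applies the reflection $\binom{-x+i}{d} = (-1)^d\binom{x+d-1-i}{d}$, the hypothesis $0\leq i\leq d-1$ being exactly what keeps the shifted index $d-1-i$ in the admissible range $[0,d]$. You instead give a self-contained derivation: the closed form $G_{i,j}(s,t)=(1+s)^{i+1}(1+t)^j\frac{\log(1+t)-\log(1+s)}{t-s}$ is right (it follows from $\Psi(a^x)=\log a/(a-1)$ with $a=(1+t)/(1+s)$, legitimately interpreted in $\QQ[[s,t]]$ as you note), and the coefficient extraction does close up: after the $\tau$-integral each term contributes $\binom{i+1}{p}\binom{j}{q}\frac{(-1)^{d+e-p-q}}{d+e-p-q+1}$, and writing $\frac{(-1)^m}{m+1}=\int_0^1(-y)^m\,dy$ turns the double sum into $\int_0^1(-y)^{d+e-i-j-1}(1-y)^{i+j+1}\,dy$ by the binomial theorem --- this is the step you call ``telescoping,'' and it works precisely because $i\leq d-1$ and $j\leq e$ guarantee that the sums over $p$ and $q$ run over the \emph{full} ranges $0\leq p\leq i+1$ and $0\leq q\leq j$ and that the exponent $d+e-i-j-1$ is non-negative. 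What your approach buys is independence from the external reference: the same generating function with $(1+s)^{x+i}$ in place of $(1+s)^{-x+i}$ would reprove the cited Lemme 1.2 itself. What it costs is length and a few interchanges of summation and integration that need the bookkeeping you flag at the end; the paper's reduction is essentially free given the citation. The only presentational quibble is that ``telescopes'' undersells the actual mechanism (an auxiliary integral representation plus the binomial theorem), and it would be worth displaying that one identity explicitly so the reader can see where the hypotheses on $i$ and $j$ enter.
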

\begin{proof}
  This follows directly from the know fact (see \cite[Lemme
  1.2]{ch_essouabri}) that, for all integers $0 \leq i \leq d$ and $0 \leq j \leq e$,
  \begin{equation}
    \label{CE_lemme12}
    \Psi\left(\binom{x+i}{d}\binom{x+j}{e}\right) = (-1)^{d+e-i-j} \frac{1}{(d+e+1) \binom{d+e}{d-i+j}}.
  \end{equation}  
\end{proof}

\begin{lemma}
  \label{eval_psi_deux}
  For $k \geq 0$, there holds
  \begin{equation}
    \Psi\left(\binom{x+2}{2}\binom{-x-3+k}{k}\binom{x+k}{k}\right) = -\frac{1}{(2k+3)(2k+1)(2k-1)}
  \end{equation}
\end{lemma}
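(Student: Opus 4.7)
The plan is to expand the three-factor integrand as a linear combination of products of the shape $\binom{x+i}{d}\binom{x+j}{e}$ already treated in \eqref{CE_lemme12}, and then to sum the three explicit values that come out.

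First, applying $\binom{-y}{k}=(-1)^{k}\binom{y+k-1}{k}$ with $y=x+3-k$ rewrites $\binom{-x-3+k}{k}$ as $(-1)^{k}\binom{x+2}{k}$, so the quantity to evaluate becomes $(-1)^{k}\,\Psi\bigl(\binom{x+2}{2}\binom{x+2}{k}\binom{x+k}{k}\bigr)$. Next, the two binomials sharing the upper argument $x+2$ are collapsed using the classical identity
\begin{equation*}
  \binom{n}{a}\binom{n}{b}=\sum_{t}\binom{n}{a+b-t}\binom{a+b-t}{t}\binom{a+b-2t}{a-t}
\end{equation*}
at $n=x+2$, $a=2$, $b=k$, which yields
\begin{equation*}
  \binom{x+2}{2}\binom{x+2}{k}=\binom{k+2}{2}\binom{x+2}{k+2}+k(k+1)\binom{x+2}{k+1}+\binom{k}{2}\binom{x+2}{k}.
\end{equation*}
This turns the problem into evaluating three moments $\Psi\bigl(\binom{x+2}{d}\binom{x+k}{k}\bigr)$ with $d\in\{k+2,k+1,k\}$, each directly covered by \eqref{CE_lemme12}.

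Applying \eqref{CE_lemme12} with $i=2$, $j=k$, $e=k$ and the three values of $d$, and using $\binom{2k+2}{2}=(k+1)(2k+1)$, $\binom{2k+1}{2}=k(2k+1)$, and $\binom{2k}{2}=k(2k-1)$, the individual evaluations work out to $\tfrac{(-1)^{k}}{(2k+3)(k+1)(2k+1)}$, $\tfrac{(-1)^{k-1}}{2k(k+1)(2k+1)}$, and $\tfrac{(-1)^{k}}{k(2k+1)(2k-1)}$. Multiplying these by the combinatorial weights $\tfrac{(k+1)(k+2)}{2}$, $k(k+1)$, and $\tfrac{k(k-1)}{2}$ and pulling out the common factor $\tfrac{(-1)^{k}}{2(2k+1)}$, one obtains
\begin{equation*}
  \Psi\!\left(\binom{x+2}{2}\binom{x+2}{k}\binom{x+k}{k}\right)=\frac{(-1)^{k}}{2(2k+1)}\left(\frac{k+2}{2k+3}-1+\frac{k-1}{2k-1}\right).
\end{equation*}
The bracket collapses: over the common denominator $(2k+3)(2k-1)$ its numerator is $(k+2)(2k-1)-(2k+3)(2k-1)+(k-1)(2k+3)$, whose quadratic and linear terms cancel identically and leave the constant $-2$. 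Multiplying by the $(-1)^{k}$ from the first step produces the target $-\tfrac{1}{(2k+3)(2k+1)(2k-1)}$.

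The main obstacle is the Vandermonde-style three-term decomposition in the middle step; once granted, everything else is bookkeeping and one line of algebra. The smallest values of $k$ need no separate treatment, because $\binom{k}{2}$ (respectively $k(k+1)$) vanishes in precisely the range $k<2$ (respectively $k<1$) for which the hypothesis $0\leq i\leq d$ of \eqref{CE_lemme12} would otherwise fail.
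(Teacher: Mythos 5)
Your proof is correct, and its skeleton matches the paper's: both arguments first convert $\binom{-x-3+k}{k}$ into $(-1)^k\binom{x+2}{k}$, then expand the product $\binom{x+2}{2}\binom{x+2}{k}$ into three terms each of which is covered by \eqref{CE_lemme12}, and finish with the same collapse of a quadratic numerator to the constant $-2$. The one genuine difference is the choice of three-term expansion. The paper invokes Proposition 2 of \cite{ch_essouabri} to write the product as $\sum_{2\le\ell\le 4}\binom{2}{\ell-2}\binom{k}{\ell-2}\binom{x+\ell}{2+k}$ --- fixed lower index $2+k$, varying shift $\ell$ --- whereas you use the classical same-upper-argument product identity to obtain fixed shift $2$ and varying lower indices $k$, $k+1$, $k+2$, with weights $\binom{k+2}{2}$, $k(k+1)$, $\binom{k}{2}$. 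The two expansions are dual to one another and feed into \eqref{CE_lemme12} equally well; yours has the mild advantage of resting on a textbook identity rather than on the companion paper's Proposition 2. You also make explicit the degenerate cases $k\in\{0,1\}$, where the hypothesis $i\le d$ of \eqref{CE_lemme12} would fail for some terms, by noting that those terms carry vanishing coefficients --- a point the paper's proof leaves implicit (its own expansion has the analogous degeneracy). All of your intermediate evaluations and the final algebra check out.
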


\begin{proof}
  Note that it is equivalent to compute
  \begin{equation*}
    \label{alternatif}
    (-1)^k \Psi\left(\binom{x+2}{2}\binom{x+2}{k}\binom{x+k}{k}\right).
  \end{equation*}
  Let us start by expanding the first product of binomials as
  \begin{equation*}
    \binom{x+2}{2}\binom{x+2}{k} = \sum_{2 \leq \ell\leq 4}\binom{2}{\ell-2}\binom{k}{\ell-2}\binom{x+\ell}{2+k},
  \end{equation*}
  by a general formula (see \cite[Proposition 2]{ch_essouabri}). Then \eqref{alternatif} becomes
  \begin{equation*}
    (-1)^k \sum_{2 \leq \ell\leq 4} \binom{2}{\ell-2}\binom{k}{\ell-2} \Psi\left(\binom{x+\ell}{2+k} \binom{x+k}{k}\right)
  \end{equation*}
  which can be evaluated using \eqref{CE_lemme12} as
  \begin{equation*}
    \sum_{2 \leq \ell\leq 4} (-1)^{\ell} \frac{ \binom{2}{\ell-2}\binom{k}{\ell-2} } {(3 + 2 k)\binom{2+2k}{\ell}}.
  \end{equation*}
  This can be expanded and simplified into the expected result.
\end{proof}


\bibliographystyle{alpha}
\bibliography{racah.bib}

\end{document}